\newtheorem{definition}{Definition}[section]
\newtheorem{proposition}[definition]{Proposition}
\newtheorem{theorem}[definition]{Theorem}
\newtheorem{lemma}[definition]{Lemma}
\newtheorem{remark}{Remark}[section]
\newcommand{\re}{\mathbb R} 
\def \pt{\partial}
\DeclareMathOperator{\diver}{div}
\title[3D axisymmetric Navier-Stokes equations in periodic slab ]{
Asymptotic behavior and Liouville-type theorems for
axisymmetric stationary Navier-Stokes equations
outside of an infinite cylinder with a periodic boundary condition
}
\author{Hideo Kozono, Yutaka Terasawa and Yuta Wakasugi}
\address[H. Kozono]{Department of Mathematics, Faculty of Science and Engineering,
Waseda University, Tokyo 169--8555, Japan, 
Research Alliance Center of Mathematical Sciences, Tohoku University, 
Sendai 980-8578, Japan}
\email[H. Kozono]{kozono@waseda.jp, hideokozono@tohoku.ac.jp}
\address[Y. Terasawa]{Graduate School of Mathematics, Nagoya University,
Furocho Chikusaku Nagoya 464-8602, Japan}
\email[Y. Terasawa]{yutaka@math.nagoya-u.ac.jp}
\address[Y. Wakasugi]{Graduate School of Advanced Science and Engineering,
Hiroshima University,
Higashi-Hiroshima, 739-8527, Japan}
\email[Y. Wakasugi]{wakasugi@hiroshima-u.ac.jp}
\begin{document}
\begin{abstract}
We study the asymptotic behavior of solutions to
the steady Navier-Stokes equations
outside of an infinite cylinder in
$\re^3$.
We assume that the flow is periodic
in $x_3$-direction and has no swirl.
This problem is closely related with
two-dimensional exterior problem.
Under a condition on the generalized finite Dirichlet integral,
we give a pointwise decay estimate of the vorticity
at the spatial infinity.
Moreover, we prove a Liouville-type theorem
only from the condition of the generalized finite Dirichlet integral.
\end{abstract}
\keywords{Axisymmetric Navier-Stokes equations; no swirl;
periodic slab domain;
asymptotic behavior; Liouville-type theorems}

\maketitle
\section{Introduction}
\footnote[0]{2010 Mathematics Subject Classification. 35Q30; 35B53; 76D05}

Let
$D = \{ (x_1, x_2) \in \re^2 ;\, \sqrt{x_1^2+x_2^2} > r_0 \}$
with some constant $r_0>0$,
and let
$S^1 = [-\pi,\pi]$.
We consider
the stationary Navier-Stokes equations
\begin{align}
\label{sns}
	\left\{ \begin{array}{l}
		(v \cdot \nabla )v + \nabla p = \Delta v, \\
		\nabla \cdot v = 0,\\
		v(x_1, x_2, x_3+2\pi) = v(x_1,x_2,x_3),
		\end{array} \right.
		\quad x \in D \times S^1,
\end{align}
where
$v = v(x) = (v_1(x), v_2(x), v_3(x))$
and
$p = p(x)$
denote the velocity vector field and the scalar pressure
at the point $x = (x_1, x_2, x_3)$, respectively.
The last condition in \eqref{sns}
means that the flow is periodic in
$x_3$-direction.

The problem \eqref{sns} is closely related with
the two-dimensional exterior problem
\begin{align}
\label{sns_2d}
	\left\{ \begin{array}{l}
		(v \cdot \nabla )v + \nabla p = \Delta v, \\
		\nabla \cdot v = 0,
		\end{array} \right.
		\quad x=(x_1,x_2) \in D, \quad
		\text{where}\ 
		v = (v_1, v_2).
\end{align}
For \eqref{sns_2d},
Gilbarg and Weinberger \cite{GiWe78}
and the subsequent studies by
Amick \cite{Am88}, Korobkov, Pileckas, and Russo \cite{KoPiRu19, KoPiRu20}
investigated the asymptotic behavior of the solution
at the spatial infinity under the assumption of
finite Dirichlet integral
\begin{align}\label{eq:Diri:2d}
    \int_{D} |\nabla v(x)|^2 \,dx < \infty.
\end{align}
More precisely, they proved that
there exists a constant vector
$v_{\infty} \in \re^2$ such that
\begin{align}
    \lim_{r \to \infty} \sup_{\theta \in (0,2\pi)} | v(r,\theta) - v_{\infty}| = 0,
\end{align}
where
$r = \sqrt{x_1^2+x_2^2}$
and $\theta = \tan^{-1}(x_2/x_1)$.
Moreover, the asymptotic behavior
\begin{align}
    \omega (r,\theta) = o(r^{-3/4}), \quad
    \nabla v (r,\theta) = o(r^{-3/4} (\log r) ) 
\end{align}
uniformly in $\theta \in (0,2\pi)$
as $r \to \infty$
was obtained.
Furthermore, the following Liouville type theorem was also proved:
let
$(v,p)$ be a smooth solution to \eqref{sns_2d} in
$\re^2$ with the finite Dirichlet integral
$\nabla v \in L^2(\re^2)$.
Then, $v$ and $p$ are constant.

Recently,
the authors \cite{KoTeWa_IUMJ} studied \eqref{sns_2d}
under the condition of
generalized finite Dirichlet integral
\begin{align}\label{gene:Diri:2d}
    \int_{D} |\nabla v(x) |^q \,dx < \infty
\end{align}
with some 
$q \in (2,\infty)$.
From the viewpoint of the decay of
$\nabla v$ at the spatial infinity,
the condition \eqref{gene:Diri:2d}
is weaker than \eqref{eq:Diri:2d}.
In \cite{KoTeWa_IUMJ}, the following asymptotic behavior
was obtained:
\begin{align}
    \omega(r, \theta)
    =
    o\left(r^{-\left(1 / q+1 / q^{2}\right)}\right),
    \quad
    \nabla v(r, \theta)
    =
    o\left(r^{-\frac{2}{q}-\frac{1}{q^{2}}+\frac{1}{2}}\right)
\end{align}
uniformly in $\theta \in (0, 2\pi)$ as $r \to \infty$.
Moreover, when $D = \re^2$,
the Liouville type theorem was proved.

It is a natural question to ask
whether similar properties as above are valid in three-dimensional domains.
For this purpose, we consider
the axially symmetric solution.
Let us use the cylindrical coordinates
$r = \sqrt{x_1^2 + x_2^2}$,
$\theta = \tan^{-1} (x_2/x_1)$,
$z = x_3$,
and let
$e_r = (x_1/r, x_2/r, 0)$,
$e_{\theta} = (-x_2/r, x_1/r, 0)$,
and
$e_z = (0,0,1)$.
Using such $\{e_r, e_{\theta}, e_z\}$ as an orthogonal basis in $\re^3$, 
we express the vector field $v = v(x)$ as
\begin{align}
    v(x) = v^r(r,\theta,z) e_r + v^{\theta}(r,\theta, z) e_{\theta} + v^z(r,\theta,z) e_z.
\end{align}
A vector field $v$ and a function
$p=p(r,\theta,z)$
are called axially symmetric 
if they are independent of
$\theta$.
The Navier-Stokes equations
in the axially symmetric case is written as follows:
\begin{align}\label{sns:axi}
    \left\{\begin{aligned}
    &\left(v^{r} \partial_{r}+v^{z} \partial_{z}\right) v^{r}
    -\frac{\left(v^{\theta}\right)^{2}}{r}+\partial_{r} p
    =
    \left(\partial_{r}^{2}+\frac{1}{r} \partial_{r}+\partial_{z}^{2}-\frac{1}{r^{2}}\right) v^{r}, \\
    &\left(v^{r} \partial_{r}+v^{z} \partial_{z}\right) v^{\theta}
    +\frac{v^{r} v^{\theta}}{r}
    =
    \left(\partial_{r}^{2}+\frac{1}{r} \partial_{r}+\partial_{z}^{2}-\frac{1}{r^{2}}\right) v^{\theta}, \\
    &\left(v^{r} \partial_{r}+v^{z} \partial_{z}\right) v^{z}
    +\partial_{z} p
    =
    \left(\partial_{r}^{2}+\frac{1}{r} \partial_{r}+\partial_{z}^{2}\right) v^{z}, \\
    &\partial_{r} v^{r}+\frac{v^{r}}{r}+\partial_{z} v^{z}
    =
    0 .
    \end{aligned}\right.
\end{align}
Moreover, the vorticity
$\omega = \nabla \times v$
is expressed by
$\omega = \omega^r e_r + \omega^{\theta} e_{\theta} + \omega^{z} e_z$,
where
\begin{equation}\label{eqn:1.4}%
	\omega^{r} = - \partial_z v^{\theta},\quad
	\omega^{\theta} = \partial_z v^r - \partial_r v^z,\quad 
	\omega^z = \frac{1}{r} \partial_r (r v^{\theta}).
\end{equation}%
For the axially symmetric Navier-Stokes equations
\eqref{sns:axi} in an exterior
$\mathcal{D}$
of an axisymmetric body in $\re^3$,
under the assumptions
$\nabla v \in L^2(\mathcal{D})$,
$v = 0$ on $\partial \mathcal{D}$,
and
$\lim_{|x|\to 0}v(x)=0$,
Choe and Jin \cite{ChJi09} obtained
the asymptotic behavior of the solution
\begin{align}
    |v^r| + |v^z|
    &= O \left( \left(\frac{\log r}{r} \right)^{\frac{1}{2}} \right),
    \quad
    |v^{\theta}|=
    O \left( \frac{(\log r)^{\frac{1}{8}}}{r^{\frac{3}{8}}} \right),\\
    |\omega^{\theta}|
    &=
    O \left( r^{-\frac{7}{8}} \right)
\end{align}
uniformly in $z$ as $r \to \infty$.
Later on, in the whole space case
$\re^3$,
Weng \cite{We18}
and
Carrillo, Pan, and Zhang \cite{CaPaZh20}
improved the above behavior to
\begin{align}
    |v|
    &= O \left( \left(\frac{\log r}{r} \right)^{\frac{1}{2}} \right),\\
    \left|\nabla v^{r}\right|
    + \left|\nabla v^{z}\right|
    &=
    O \left(
    r^{-\left(\frac{9}{8}\right)^{-}} \right),
    \quad
    \left|\nabla v^{\theta}\right| 
    =
    O \left( r^{-\left(\frac{67}{64}\right)^{-}} \right),\\
    \left|\omega^{r}\right|
    + \left|\omega^{z}\right|
    &=
    O \left( \frac{(\ln r)^{\frac{11}{8}}}{r^{\frac{9} {8}}} \right),
    \quad
\label{eq:CPZ}
    \left|\omega^{\theta}\right|
    =
    O \left( \frac{(\ln r)^{\frac{3}{4}}}{r^{\frac{5}{4}}} \right)
\end{align}
uniformly in $z$ as $r \to \infty$,
where
$a^-$
denotes arbitrary constant less than $a$.
Recently,
Li and Pan \cite{LiPa20} studied the case of
generalized finite Dirichlet integral
$\nabla v \in L^q(\re^3)$
with $q \in (2,\infty)$,
and obtained the asymptotic behavior
\begin{align}\label{eq:LP}
    \left|\omega^{\theta}\right|
    =
    O \left( r^{-\left(\frac{1}{q}+\frac{3}{q^{2}}\right)^-} \right),
    \quad
    \left| \omega^{r}\right|
    +
    \left|\omega^{z}\right|
    =
    O\left(r^{-\left(\frac{1}{q}+\frac{1}{q^{2}}+\frac{3}{q^{3}}\right)^-}\right)
\end{align}
for $q \in [3,\infty)$, provided that
$\sup |v(r_*,z)| \le C$ holds for some
$r_* > 0$;
\begin{align}
    \left|\omega^{\theta}(r, z)\right|=O\left(r^{-(\frac{2}{q}})^-\right), \quad\left|\omega^{r}(r, z)\right|+\left|\omega^{z}(r, z)\right|=O\left(r^{-\left(\frac{1}{q}+\frac{2}{q^{2}}\right)^-}\right)
\end{align}
for $q \in (2,3)$,
provided that
$v^z \to 0$ as $r \to \infty$.

The Liouville type theorem for \eqref{sns:axi}
in
$\re^3$
is still an open question,
but partial results were given by
Lei and Zhang \cite{LeZh11}
with the additional conditions that
$r v^{\theta}$
is bounded and
the stream function is a BMO function.
Later on,
Lei, Zhang, and Zhao
\cite{LeZhZh17}
showed that if
$r v^{\theta} \in L^p(\re^3)$
with some $1 \le p < \infty$,
or if
$\lim_{r\to \infty} r v^{\theta} = 0$,
then $v$ must be a constant.
On the other hand,
Zhao \cite{Zh19}
proved that if
$\nabla v \in L^2(\re^3)$,
$\lim_{|x| \to \infty} v(x) = 0$,
and if 
$|v| \le C(1+r)^{-(2/3)^+}$
or 
$|\omega| \le C (1+r)^{-(5/3)^+}$
holds,
then $v$ must be zero.

Recently,
Carrillo, Pan, Zhang and Zhao \cite{CaPaZhZh20}
studied the Liouville type theorem for
axially symmetric problem
in a periodic slab domain
\begin{align}
    \left\{\begin{aligned}
    &(v \cdot \nabla) v+\nabla p = \Delta v \quad  \text { in } \quad \mathbb{R}^{2} \times S^{1}, \\
    &\nabla \cdot v = 0, \\
    &v \left(x_{1}, x_{2}, x_3\right)=v\left(x_{1}, x_{2}, x_3+2 \pi\right), \\
    &\lim _{|x| \rightarrow \infty} v=0.
\end{aligned}\right.
\end{align}
They proved that,
Under the condition of finite Dirichlet integral
$\nabla v \in L^2(\re^2 \times S^1)$,
$v \equiv 0$.

In this paper, to investigat more detailed properties of the solution,
we further assume that the solution has no swirl,
that is,
$v^{\theta} \equiv 0$.
In this case, the Navier-Stokes equations
are written as
\begin{align}
\label{sns:axi:nosw}
	\left\{ \begin{array}{l}
		\displaystyle (v^r \pt_r + v^z \pt_z ) v^r + \pt_r p
			= \left( \pt_r^2 + \frac{1}{r}\pt_r + \pt_z^2 - \frac{1}{r^2} \right) v^r,\\
		\displaystyle (v^r \pt_r + v^z \pt_z ) v^z + \pt_z p
			= \left( \pt_r^2 + \frac{1}{r}\pt_r + \pt_z^2 \right) v^z,\\
		\displaystyle \pt_r v^r + \frac{v^r}{r} + \pt_z v^z = 0,
	\end{array} \right.
\end{align}
Moreover,
since $v^{\theta}=0$,
the vorticity
$\omega$
satisfies
$\omega^{r} = \omega^{z} = 0$,
and the equation of
$\omega^{\theta}$ becomes
\begin{align}
    (v^r \partial_r + v^z \partial_z) \omega^{\theta}
    - \frac{v^r}{r} \omega^{\theta}
    = \left(
        \partial_r^2 + \frac{1}{r}\partial_r + \partial_z^2 - \frac{1}{r^2}
    \right) \omega^{\theta}.
\end{align}
Introducing a new unknown function
$\Omega = \omega^{\theta}/r$,
we write the equation above to
simpler form
\begin{align}\label{eq:Omega}
    - \left(
        \partial_r^2 + \partial_z^2 + \frac{3}{r} \partial_r
    \right) \Omega
    + \left(
        v^r \partial_r + v^z \partial_z
    \right) \Omega = 0.
\end{align}
Here, we remark that
the fact
$\omega^{\theta}(0,z) = 0$
(cf. Liu and Wang \cite[Lemma 2.2]{LiWa09})
guarantees continuity of
$\Omega(r,z)$
as
$r\to +0$.
This equation has similar properties to those of 
two-dimensional vorticity equation
such as maximum principle.
When the domain is an outside of a cylinder
$D \times \re$,
the authors \cite{KoTeWa_pre}
investigated the asymptotic behavior
of $\Omega$ and $\omega^{\theta}$.
Under the assumptions
\begin{align}
\label{eq:Diri:nonperi}
    \int_{D \times \re} |\nabla v(x)|^q \,dx < \infty
\end{align}
with some
$q \in [2,\infty)$
and
$|v (r,z) | \le C(1+r)^k$
with some
$k \in \re$,
they obtained
\begin{align}\label{eq:KTW}
    |\Omega| &=
    o\left(
        r^{-\left( 1+\frac{3}{q} -\frac{1}{2q} \max\{0,1+k\} \right)}
    \right),\quad
    |\omega^{\theta}|
    =
    o\left(
        r^{-\left(\frac{3}{q} -\frac{1}{2q} \max\{0,1+k\} \right)}
    \right)
\end{align}
uniformly in $z$ as $r \to \infty$.

Concerning the Liouville type theorem
in the swirl free case,
Koch, Nadirashvili, Seregin, and Sverak
\cite{KoNaSeSv09}
proved that
if $v$ is a bounded weak solution to \eqref{sns:axi:nosw} in $\re^3$,
then $v$ is a constant.
Lei, Zhang, and Zhao \cite{LeZhZh17}
proved that
if $\lim_{r \to \infty} |\Omega| = 0$
uniformly in $z$,
then $\omega^{\theta} = 0$ and
$v$ is harmonic.
In particular, if
$v$ is sublinear, that is,
$|v| = o(|x|)$ as $|x| \to \infty$,
then $v$ must be a constant.
Also, Pan and Li \cite{PaLi21NA}
showed that if
$v$ satisfies
\begin{align}
    \left|v^{r}(r, z)\right|+\left|v^{z}(r, z)\right| &\leq C(r+|z|+c)^{\alpha},\\
    \left|v^{r}(r, z)\right|+\left|v^{z}(r, z)-v^{z}(0, z)\right| &\leq C r
\end{align}
with some $\alpha < 1$ and $C>0$,
then $v$ must be a constant.
In \cite{KoTeWa_pre}, the authors
obtained the Liouville type result
under
$\nabla v \in L^q(\re^3)$ with some
$q \in [2,\infty)$
and
\begin{align}
    v^{r}(r, z) \geq-C(1+r)^{k}, \quad(\operatorname{sign} z) v^{z}(r, z) \geq-C(1+r)^{k}
\end{align}
with some
$k \le q + 1$.
This condition does not require
upper bounds of
$v^r$ and $({\rm sign\,}z)v^z$.

In this paper, we study
the equation \eqref{sns},
that is, the case of periodic slab domain.
We put the following condition of
finite generalized Dirichlet integral.
\begin{align}\label{eq:Diri}
     \int_{D \times S^1} |\nabla v(x)|^q \,dx < \infty
\end{align}
with some $q \in [2,\infty)$.
We remark that, formally,
the assumption \eqref{eq:Diri}
is weaker than
\eqref{eq:Diri:nonperi},
since no decay is required for $z$-direction
in \eqref{eq:Diri}.
Our main result on the asymptotic behavior
of $\Omega$ and $\omega^{\theta}$
is as follows:
\begin{theorem}\label{thm_asymp}
Let $(v,p)$ be a smooth axisymmetric solution of \eqref{sns}
with no swirl
satisfying \eqref{eq:Diri}
with some $q \in [2,\infty)$.
Then, we have
\begin{align}
    \begin{dcases}
    \lim_{r\to \infty} r^{ 1+ 2/q - 1/q^2}
	\sup_{z \in [-\pi, \pi]} |\Omega(r,z)| = 0
	&(q>2),\\
	\lim_{r\to \infty} r^{3/2}
	\sup_{z \in [-\pi, \pi]} |\Omega(r,z)| = 0
	&(q=2)
    \end{dcases}
\end{align}
and
\begin{align}
    \begin{dcases}
    \lim_{r\to \infty} r^{2/q - 1/q^2}
	\sup_{z \in [-\pi, \pi]} |\omega^{\theta} (r,z)| = 0
	&(q>2),\\
	\lim_{r\to \infty} r^{1/2}
	\sup_{z \in [-\pi, \pi]} |\omega^{\theta} (r,z)| = 0
	&(q=2).
    \end{dcases}
\end{align}
\end{theorem}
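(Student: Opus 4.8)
\emph{Approach.} The plan is to work entirely with the scalar equation \eqref{eq:Omega} for $\Omega = \omega^{\theta}/r$, exploiting that in the $(r,z)$ variables it is a linear second-order equation $-\Delta_{(r,z)}\Omega + b\cdot\nabla\Omega = 0$ with $b = (v^r - 3/r,\,v^z)$ and \emph{no} zeroth-order term, so it obeys the weak maximum principle on bounded annular regions; writing the diffusion with its natural weight gives the divergence form $-\mathrm{div}_{(r,z)}(r^3\nabla\Omega) + r^3(v^r\partial_r + v^z\partial_z)\Omega = 0$, and I would use the measure $r^3\,dr\,dz$ throughout. Since $\omega^{\theta} = r\Omega$, every bound on $\Omega$ transfers immediately to $\omega^{\theta}$ with the exponent shifted by one, which is exactly the relation between the two displays in the statement, so the whole proof reduces to estimating $\sup_z|\Omega(r,z)|$.

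First I would record the integral data coming from \eqref{eq:Diri}. Passing to cylindrical coordinates (where $dx = 2\pi\,r\,dr\,dz$ after the trivial $\theta$-integration) and using $|\omega^{\theta}|\le\sqrt{2}\,|\nabla v|$ together with $\omega^{\theta}=r\Omega$ yields $\int_{r_0}^{\infty}\int_{-\pi}^{\pi}|\Omega|^{q}\,r^{q+1}\,dz\,dr < \infty$. A Caccioppoli estimate, obtained by testing \eqref{eq:Omega} against $\eta^2\Omega$ with cutoffs $\eta$ supported on dyadic annuli $\{R\le r\le 2R\}\times S^1$, then bounds $\int r^3|\nabla\Omega|^2$ on each annulus by the weighted $L^2$ mass of $\Omega$ on a slightly larger annulus plus a drift contribution. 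To control that drift I would extract from \eqref{eq:Diri}, via H\"older along the rays $r\mapsto v(r,z)$, the sublinear growth $|v(r,z)|\le C\,r^{\,1-2/q}$ for $q>2$ (degenerating to $|v|\le C(\log r)^{1/2}$ at $q=2$); this is the only a priori information on the velocity the hypotheses permit, and it replaces the boundedness assumption used in the non-periodic companion result \cite{KoTeWa_pre}.

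Next I would convert weighted integral smallness into pointwise smallness. Summability of $\int_{\{R\le r\le 2R\}\times S^1}|\Omega|^{q}r^{q+1}$ over dyadic $R$ forces this quantity to be $o(1)$, hence $\int_{\{R\le r\le 2R\}\times S^1}|\Omega|^{q}\,dr\,dz = o(R^{-(q+1)})$ as $R\to\infty$. A Moser-type iteration on each annulus (whose constants now genuinely depend on the velocity growth), combined on the cross-sections $\{r\}\times S^1$ with a one-dimensional Gagliardo--Nirenberg inequality and the Caccioppoli bound on $\partial_z\Omega$, then turns this into a pointwise estimate for $\sup_z|\Omega(r,\cdot)|$ on each dyadic annulus. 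The weak maximum principle for \eqref{eq:Omega} --- the region $\{r_1\le r\le r_2\}\times S^1$ has boundary only on the two circles $r=r_1$ and $r=r_2$, since periodicity removes the $z$-boundary --- propagates these values across the intervening radii and so yields a decay rate valid for \emph{all} large $r$. In the drift-free idealization this mechanism would already give $\sup_z|\Omega| = o(r^{-(1+2/q)})$.

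Finally I would bootstrap, feeding the current pointwise decay of $\Omega$ back into the Caccioppoli and cross-sectional Sobolev steps to sharpen the weighted $L^2$ bounds on successive annuli; iterating and optimizing produces the fixed-point exponent in the statement. The main obstacle is precisely the drift term: because $|v|$ may truly grow like $r^{\,1-2/q}$, convection is \emph{not} a lower-order perturbation of the weighted Laplacian, and the resulting P\'eclet-type factor must be absorbed using the divergence-free identity $\mathrm{div}_{(r,z)}(rv)=0$ together with the Poincar\'e (Wirtinger) inequality on $S^1$ supplied by the slab geometry. Tracking how the velocity growth, the weight $r^3$, and the cross-sectional Poincar\'e constant combine through the iteration is what costs the term $1/q^{2}$ relative to the ideal rate, pinning down the exponent $1 + 2/q - 1/q^{2}$ for $q>2$; the same bookkeeping, in the borderline case $q=2$ where the velocity bound degrades to a logarithm, forces the separate and weaker exponent $3/2$.
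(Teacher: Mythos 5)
Your outline reproduces the skeleton of the paper's argument: from \eqref{eq:Diri} you extract $\int r^{q+1}|\Omega|^q\,drdz<\infty$, you get the velocity growth $|v|\le Cr^{1-2/q}$ by a Morrey-type estimate on dyadic annuli (the paper's Lemma \ref{lem:morrey}), you absorb the drift in an energy/Caccioppoli estimate using $\diver_{r,z}(rv)=0$ together with the Wirtinger inequality on $S^1$, and you pass from weighted integral bounds to $\sup_z$ bounds at good dyadic radii via a one-dimensional trace inequality, filling in the gaps with the maximum principle on $\{\rho_1\le r\le\rho_2\}\times S^1$. All of this matches the paper. The genuine gap is at the quantitative heart of the theorem: the exponent $1+2/q-1/q^2$ is never derived. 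You assert that a ``bootstrap'' feeding pointwise decay back into the Caccioppoli step converges to this ``fixed-point exponent,'' but no such iteration is set up, its convergence is not examined, and the paper needs nothing of the sort. In the paper the exponent comes out in a single pass: testing \eqref{eq:Omega} with $h(\Omega)=|\Omega|^q$ against the weight $r^{\alpha}\eta_R$, the velocity growth $r^{1-2/q}$ forces the admissible weight down to $\alpha=q+1-2/q$ (Lemma \ref{lem_en_est}), and then Proposition \ref{prop_pt} — mean value theorem on dyadic shells, the elementary identity $\partial_z|f|^q=q|f|^{q-2}f\,\partial_z f$, and the maximum principle — yields the rate $\beta/q$ with $\beta=\min\{q+2,\tfrac{\alpha+q+3}{2}\}=q+2-1/q$. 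No Moser iteration and no bootstrap are involved; without this (or an equally explicit substitute) your proposal proves a statement with an unspecified exponent, not the theorem.

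Two further points would need repair. First, at $q=2$ the pointwise bound $|v|\le C(\log r)^{1/2}$ you invoke is not available from $\nabla v\in L^2$ alone (Morrey fails at the endpoint); the Gilbarg--Weinberger lemma only controls the $z$-average, $\int_{-\pi}^{\pi}|v^r(r,z)|^2\,dz=o(\log r)$, so one must split $v^r=\overline{v^r}+(v^r-\overline{v^r})$, treat the mean with that lemma and the oscillation with Wirtinger, and moreover truncate $h$ (as in the paper, $h(\Omega)=|\Omega|^2$ for $|\Omega|\le A$ and $A(2|\Omega|-A)$ beyond) to close the Schwarz estimate with only $\nabla v\in L^2$ in hand. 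Second, testing with $\eta^2\Omega$ gives an $L^2$-type Caccioppoli bound on $\int r^3|\nabla\Omega|^2$, whereas the integrability you actually possess is $L^q$; for $q>2$ the paper's choice $h(\Omega)=|\Omega|^q$, producing $\int r^{\alpha}|\Omega|^{q-2}|\nabla\Omega|^2$, is what meshes with the hypothesis \eqref{Diri_o} and with the $\partial_z|f|^q$ trace step, and your $L^2$ bookkeeping would have to be redone in that framework before the exponents can be trusted.
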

\begin{remark}
Compared to the results in the non periodic cases
\eqref{eq:CPZ}, \eqref{eq:LP}, and \eqref{eq:KTW},
the decay rates in Theorem \ref{thm_asymp}
are in general slightly worse.
This seems to be due to the weaker assumption
\eqref{eq:Diri} and that we do not assume
any assumption on the pointwise behavior of $v$.
\end{remark}

Moreover, by modifying the method of the
proof of Theorem \ref{thm_asymp},
we obtain the Liouville type theorem
in $\re^2 \times S^1$:
\begin{theorem}\label{thm:liouville}
Let
$D = \re^2$
and let
$(v,p)$
be an axisymmetric smooth solution to \eqref{sns}
with no swirl.
We assume \eqref{eq:Diri} with some
$q \in [2,\infty)$.
Then, $v$ must be a constant vector.
\end{theorem}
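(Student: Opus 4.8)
The plan is to prove the theorem in three stages: show that $\Omega = \omega^{\theta}/r$ vanishes identically, deduce from this that $v$ is harmonic, and then use the generalized Dirichlet integral \eqref{eq:Diri} to force $v$ to be constant. Since $D = \re^2$, the solution is smooth on all of $\re^2 \times S^1$, and by the continuity of $\Omega$ across the axis $r \to +0$ recorded after \eqref{eq:Omega}, the function $\Omega$ solves \eqref{eq:Omega} on the whole domain whose only ``boundary'' is the spatial infinity $r \to \infty$. Applying Theorem \ref{thm_asymp}, whose proof carries over to $D = \re^2$ with the axis causing no difficulty, yields $\sup_{z \in [-\pi,\pi]} |\Omega(r,z)| \to 0$ as $r \to \infty$, since the exponent of $r$ appearing there is positive in all cases.

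The heart of the argument, obtained by modifying the proof of Theorem \ref{thm_asymp}, is a maximum principle for \eqref{eq:Omega}; this is the step I expect to be the main obstacle. Interpreting $\partial_r^2 + \partial_z^2 + \frac{3}{r}\partial_r$ as the radial part of the Laplacian on $\re^4 \times S^1$ (the coefficient $3/r$ corresponding to dimension four), equation \eqref{eq:Omega} reads $\Delta \Omega = (v^r \partial_r + v^z \partial_z)\Omega$, a uniformly elliptic equation with smooth coefficients and no zeroth-order term which, after this lift, is regular across the axis. Because $\Omega \to 0$ as $r \to \infty$ and $\Omega$ is continuous on the connected, boundaryless domain $\re^4 \times S^1$, a nontrivial $\Omega$ would attain a strictly positive maximum or a strictly negative minimum at a finite interior point, and the strong maximum principle would then force $\Omega$ to be constant, contradicting the decay. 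The delicate points are precisely making this global argument rigorous across the coordinate singularity at $r = 0$ and over the non-compact domain; compare the related assertion of Lei, Zhang, and Zhao \cite{LeZhZh17} in $\re^3$. We thus conclude $\Omega \equiv 0$, hence $\omega^{\theta} = r\Omega \equiv 0$, and together with $\omega^r = \omega^z = 0$ that $\omega = \nabla \times v = 0$.

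Finally, from $\nabla \cdot v = 0$ and $\nabla \times v = 0$ we obtain $\Delta v = \nabla(\nabla \cdot v) - \nabla \times (\nabla \times v) = 0$, so every Cartesian component of $v$, and hence every entry of $\nabla v$, is harmonic on $\re^3$ and $2\pi$-periodic in $x_3$. To finish I would first bound $\nabla v$: for each $x$ the mean value property of the harmonic function $\partial_i v_j$ over the unit ball $B(x,1)$ and Hölder's inequality give $|\partial_i v_j(x)| \le C \left( \int_{B(x,1)} |\nabla v|^q \, dy \right)^{1/q}$, and by $2\pi$-periodicity in $x_3$ (the ball spans an $x_3$-interval of length $2 < 2\pi$) the right-hand side is at most $C \| \nabla v\|_{L^q(\re^2 \times S^1)}$, which is finite by \eqref{eq:Diri} and independent of $x$. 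Thus each $\partial_i v_j$ is a bounded harmonic function on $\re^3$, hence constant by Liouville's theorem; since $\re^2 \times S^1$ has infinite measure, the finiteness of the integral in \eqref{eq:Diri} forces this constant to be zero. Therefore $\nabla v \equiv 0$ and $v$ is a constant vector, as claimed.
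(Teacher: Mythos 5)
Your argument is correct in outline but takes a genuinely different route from the paper's own proof of Theorem \ref{thm:liouville}. You combine Theorem \ref{thm_asymp} (giving $\Omega\to 0$ uniformly as $r\to\infty$) with a strong maximum principle for \eqref{eq:Omega}, realized by lifting $\Omega$ to a function on $\re^4\times S^1$ where $\partial_r^2+\partial_z^2+\tfrac{3}{r}\partial_r$ becomes a genuine Laplacian; this is precisely the alternative the authors acknowledge in Remark 1.2 (ii) as being available through Lei, Zhang, and Zhao \cite{LeZhZh17}, and which they deliberately decline to follow. The paper instead opens Section 3 by noting that the maximum principle is not directly applicable because of the axis, and rederives the conclusion by repeating the $L^q$-energy identity of Lemma \ref{lem_en_est} on $[0,R]\times S^1$ with weight $r^{\alpha}$, $\alpha\in(1,3)$, so that the boundary term at $r=0$ vanishes; letting $\alpha\to 1^{+}$ and $R\to\infty$ gives $\int r\,|\Omega|^{q-2}|\nabla_{r,z}\Omega|^2\,drdz=0$ directly, whence $\Omega$ is constant and, by \eqref{eq:li:omega}, zero. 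What the paper's route buys is self-containedness and the complete avoidance of the only delicate point in your argument, which you correctly flag but do not discharge: to run the strong maximum principle you must verify that the lifted function is regular across $y=0$ (via $\omega^{\theta}(0,z)=0$ and the characterization of Liu and Wang \cite{LiWa09}) and that the lifted drift $v^r(r,z)\,y/|y|$ is locally bounded there (using $v^r(0,z)=0$); these are standard but should be stated. What your route buys is that it reuses Theorem \ref{thm_asymp} wholesale and needs no new energy estimate. Your final step --- $\omega=0$ and $\nabla\cdot v=0$ imply $\Delta v=0$, then the mean value property, $2\pi$-periodicity in $x_3$, and \eqref{eq:Diri} make $\nabla v$ a bounded harmonic function which must vanish --- coincides with the paper's concluding paragraph, and your version is in fact more detailed than the paper's.
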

\begin{remark}
{\rm (i)}
We do not assume the boundedness or
pointwise behavior of
$v$ or $\Omega$,
and hence,
Theorem \ref{thm:liouville}
is not included by those of \cite{CaPaZhZh20, KoNaSeSv09, LeZhZh17, PaLi21NA}.
\par
\noindent
{\rm (ii)}
Applying
Theorem \ref{thm_asymp} in $D = \re^2$,
we have, in particular,
$\lim_{r \to \infty} |\Omega| = 0$
uniformly in $z$.
This enables us to make use of the result by
Lei, Zhang, and Zhao \cite{LeZhZh17}
so that the same conclusion as
Theorem \ref{thm:liouville} may be obtained.
However,
we shall give another independent proof of
Theorem \ref{thm:liouville}
in Section 3.
The advantage of our proof is that the argument is simpler 
and does not rely on the maximum principle.
\par
\noindent
{\rm (iii)}
Recently, Bang,Gui, Wang and Xie \cite{BaGuWaXi} proved the Liouville-type theorem 
in $\re^2\times S^1$
in such a way that any bounded solution $v$ has the form $v=(0, 0, c)$ with some 
constant $c \in \re$ provided either $v^r$ or $v^\theta$ is axisymmetric, or provided 
$\displaystyle{\lim_{r\to\infty}rv^r = 0}$.     
\end{remark}
%
\section{Proof of Theorem \ref{thm_asymp}}
In what follows, we denote by $C$ generic constants.
In particular, $C = C(*,\ldots,*)$
denotes constants depending only on the quantities appearing in the parenthesis.
We sometimes use the symbols
$\diver_{r,z}, \nabla_{r,z}$, and $\Delta_{r,z}$,
which mean the differential operators defined by
$\diver_{r,z} (f_1, f_2) (r,z) = \partial_r f_1(r,z) + \partial_z f_2(r,z)$,
$\nabla_{r,z} f(r,z) = (\partial_r f, \partial_z f)(r,z)$,
and
$\Delta_{r,z} f(r,z) = \partial_r^2 f (r,z) + \partial_z^2 f(r,z)$,
respectively.

Since
\begin{align*}
	|\nabla_x v(x)|^2 = | \nabla_{r,z} v |^2 + \frac{1}{r^2} |v^r(r,z)|^2
\end{align*}
for the axisymmetric vector field $v$ without swirl,
we first note that the condition \eqref{eq:Diri} implies
\begin{align*}
	\infty > \int_{D \times S^1} |\nabla v(x)|^q \,dx
		\ge C \int_{D \times S^1} \left[ |\nabla_{r,z} v (r,z)|^q + r^{-q} |v^r (r,z)|^q \right] r \,drdz,
\end{align*}
and hence, $\omega^{\theta}$ and $\Omega$ satisfy
\begin{align}
\label{Diri_o}
	\int_{-\pi}^{\pi} \int_{r_0}^{\infty} |\omega^{\theta}(r,z)|^q r \,drdz < \infty,\quad
	\int_{-\pi}^{\pi} \int_{r_0}^{\infty}  r^{q+1} | \Omega(r,z) |^q \,drdz < \infty,
\end{align}
respectively.

\subsection{Preliminary estimates}
When $q >2$.
following the argument of Li and Pan \cite{LiPa20},
we prepare a pointwise growth estimate in slab domain.
\begin{lemma}\label{lem:morrey}
Let
$r_0 > 0$
and let
$f = f(r,z) : [r_0, \infty) \times [-\pi,\pi] \to \re$
satisfy
\begin{align}
    \int_{-\pi}^{\pi} \int_{r_0}^{\infty}
    | \nabla_{r,z} f(r,z)|^q r \,drdz < \infty
\end{align}
with some
$q \in (2,\infty)$.
Then, there exists $C>0$ depending only on
$r_0, q$
such that
\begin{align}
    |f(r,z)| \le C (1+r)^{1-2/q}.
\end{align}
\end{lemma}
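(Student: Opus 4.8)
The plan is to prove a weighted Morrey-type estimate by exploiting the $2\pi$-periodicity in $z$ to ``unfold'' the slab into a genuinely two-dimensional region, on which balls of radius comparable to $r$ become available. Write $M := \int_{-\pi}^{\pi}\int_{r_0}^{\infty} |\nabla_{r,z} f|^q r\,dr\,dz < \infty$. First I would extend $f$ periodically in $z$ to $[r_0,\infty)\times\re$, so that $f \in W^{1,q}_{\mathrm{loc}}$ there and the value $f(r_*,z_*)$ is unchanged.

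The key step is a uniform bound on the local (unweighted) gradient norm over large balls. Fix $(r_*,z_*)$ with $r_* \ge 4r_0$ and set $\rho = r_*/2$, $B = B_\rho(r_*,z_*)\subset\re^2$. On $B$ one has $r \in [r_*/2, 3r_*/2]$, so the weight satisfies $r \sim r_*$; and since $B$ has $z$-extent $\le 2\rho = r_*$, periodicity lets me bound the integral over $B$ by $O(r_*)$ copies of the integral over one period. Combining these, $\int_B |\nabla_{r,z} f|^q\,dr\,dz \le \tfrac{C}{r_*}\int_B |\nabla_{r,z} f|^q r\,dr\,dz \le CM$, uniformly in $r_*$. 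Morrey's inequality in $\re^2$ (valid since $q>2$) then gives $|f(r_*,z_*) - f_B| \le C\|\nabla_{r,z} f\|_{L^q(B)}\rho^{1-2/q} \le CM^{1/q} r_*^{1-2/q}$, where $f_B$ denotes the mean of $f$ over $B$.

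It remains to control the mean $f_B$, which I would do by telescoping down to the scale $r_0$. Choosing a chain of overlapping balls $B_j$ with radii $\rho_j = 2^{-j} r_*$ marching toward $r \sim r_0$, the same estimate bounds each consecutive difference $|f_{B_j} - f_{B_{j+1}}| \le CM^{1/q}\rho_j^{1-2/q}$; since $1 - 2/q > 0$ for $q > 2$, the geometric series $\sum_j \rho_j^{1-2/q}$ converges and is $\le C_q r_*^{1-2/q}$. Hence $|f_{B_0} - f_{B_N}| \le CM^{1/q} r_*^{1-2/q}$ with $B_N$ a fixed ball at scale $r_0$. Adding the oscillation bound from the previous step yields $|f(r_*,z_*)| \le |f_{B_N}| + CM^{1/q} r_*^{1-2/q}$, which, using $(1+r_*)^{1-2/q}\ge c>0$, can be written as $C(1+r_*)^{1-2/q}$; for $r_*$ in a bounded range the bound follows from continuity. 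The resulting constant depends on $r_0$, $q$, the quantity $M$, and the mean of $f$ over $B_N$, and I would track these dependences explicitly.

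The main obstacle is obtaining the sharp exponent $1-2/q$ rather than the weaker $1-1/q$. If one works directly on the anisotropic box $[r,2r]\times[-\pi,\pi]$ without unfolding, its Poincar\'e/Morrey constant degenerates like $r$ because the box is long and thin, and one only recovers $|f(r,z) - f_{\mathrm{box}}| \le CM^{1/q} r^{1-1/q}$. The gain comes precisely from periodic unfolding: it replaces the thin box by a genuine ball of radius $\sim r$, on which the $\re^2$ Morrey constant scales as $\rho^{1-2/q}$ while, thanks to the weight $r$ and the averaging over $\sim r$ periods, $\|\nabla_{r,z} f\|_{L^q(B)}$ stays bounded uniformly in $r$. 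A cross-check on the exponent is given by the $z$-average $\bar f(r) = \tfrac{1}{2\pi}\int_{-\pi}^{\pi} f(r,z)\,dz$: H\"older's inequality yields $|\bar f'(r)| \le C\big(\int_{-\pi}^{\pi} |\nabla_{r,z} f|^q\,dz\big)^{1/q}$, and another H\"older in $r$ with the weight, using $\int_{r_0}^{r} s^{-1/(q-1)}\,ds \sim r^{(q-2)/(q-1)}$, reproduces the same rate $|\bar f(r) - \bar f(r_0)| \le CM^{1/q} r^{1-2/q}$.
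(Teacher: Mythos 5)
Your argument is correct and yields the stated bound, but the mechanism behind the key local estimate differs from the paper's. The paper does not unfold the slab: it performs the anisotropic rescaling $r=R\rho$, $\phi(\rho,z)=f(R\rho,z)$, which compresses the dyadic annulus $[R/4,4R]\times[-\pi,\pi]$ onto the \emph{fixed} domain $[1/4,4]\times[-\pi,\pi]$, applies Morrey there with an $R$-independent constant, and recovers the exponent $1-2/q$ because the bound $|\nabla_{\rho,z}\phi|\le R|\nabla_{r,z}f|$ costs a factor $R$ while the Jacobian $d\rho=dr/R$ and the weight $r\sim R$ each return a factor $R^{-1/q}$; it then telescopes dyadically in $r$ exactly as you do with ball averages. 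So your side remark that working on the anisotropic box ``only recovers $r^{1-1/q}$'' is true for a naive chain of unit balls but not for the paper's rescaling — the loss you describe is an artifact of chaining, not of the geometry. Your periodic unfolding buys a cleaner, scale-invariant application of Morrey on genuine balls $B_{r_*/2}(r_*,z_*)$, with the pleasant cancellation that the $\sim r_*$ periods covered by the ball exactly offset the weight $1/r_*$; the paper's version buys independence from periodicity. That last point is the one caveat worth recording: as stated, the lemma does not assume $f(r,-\pi)=f(r,\pi)$, and without that matching your periodic extension fails to be $W^{1,q}_{\mathrm{loc}}$ across $z=\pm\pi$, whereas the paper's proof works directly on the strip $[-\pi,\pi]$ and needs no extension. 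Since the lemma is only ever applied to $z$-periodic fields this is harmless in context, but you should either add the periodicity hypothesis or note that your chain of balls can be kept inside a single period for small radii. Finally, both proofs (yours and the paper's) produce a constant that also depends on $f$ through the Dirichlet integral and a reference value such as $f_{B_N}$ or $f(r_*,0)$; you are right to flag this, as the statement's ``depending only on $r_0,q$'' is imprecise on this point.
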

\begin{proof}
Let
$R > 4 r_0$
be a parameter, and consider
the transformation
$r = R \rho$
and
\begin{align}
    \phi (\rho, z ) = f(R\rho, z).
\end{align}
Since $2 < q < \infty$, it follows from Morrey's inequality 
(see,e.g., Gilbarg-Trudinger \cite[Theorem 7.17]{GiTr}) that for 
$(\rho_1, z_1), (\rho_2, z_2) \in [1/4,4]\times [-\pi,\pi]$,
\begin{align}
    | \phi(\rho_1,z_1) - \phi(\rho_2,z_2)|
    &\le
    C \left(
    \int_{-\pi}^{\pi}\int_{1/4}^4 |\nabla_{\rho,z} \phi(\rho,z)|^q \,d\rho dz \right)^{1/q}
\end{align}
with some constant
$C = C(q) > 0$.
Since $R \ge 4r_0$ and
$\nabla_{\rho,z} \phi(\rho,z) = ( R \partial_r f, \partial_z f)(R\rho, z)$,
we obtain
\begin{align}
    C \left(
    \int_{-\pi}^{\pi}\int_{1/4}^4 |\nabla_{\rho,z} \phi(\rho,z)|^q \,d\rho dz \right)^{1/q}
    &\le
    C R
    \left(
    \int_{-\pi}^{\pi} \int_{1/4}^{4} |\nabla_{r,z} f(R \rho,z)|^q \,d\rho dz
    \right)^{1/q} \\
    &\le
    C R^{1-1/q}
    \left(
        \int_{-\pi}^{\pi} \int_{R/4}^{4R}
        |\nabla_{r,z} f(r,z)|^q \,drdz
    \right)^{1/q}\\
    &\le C R^{1-2/q}
     \left(
        \int_{-\pi}^{\pi} \int_{R/4}^{4R}
        |\nabla_{r,z} f(r,z)|^q r \,drdz
    \right)^{1/q}.
\end{align}
From this,
putting $r_1 = R\rho_1$
and $r_2 = R\rho_2$,
we have,
for any
$(r_1,z_1), (r_2, z_2) \in [-R/4, 4R] \times [-\pi, \pi]$,
\begin{align}
    |f(r_1,z_1) - f(r_2, z_2)|
    \le C R^{1-2/q}
    \left(
    \int_{-\pi}^{\pi} \int_{r_0}^{\infty}
    |\nabla_{r,z} f(r,z)|^q r \,drdz
    \right)^{1/q},
\end{align}
where we note that the constant
$C$
is independent of
$R$.
Now, fix a point
$r_* \in (r_0,\infty)$,
and let
$r > r_*, z \in S^1$.
There exists a nonnegative integer
$n$
such that 
$2^{n} \le r/r_* \le 2^{n+1}$.
Therefore, we have
\begin{align}
    | f(r,z) - f(r_*, z) |
    &\le
    \sum_{j=0}^{n-1}
    \left|
        f \left( \frac{r}{2^j},z\right)
        - f \left( \frac{r}{2^{j+1}}, z \right) 
    \right|
    +
    \left| f \left( \frac{r}{2^n}, z \right)
    - f(r_*,z) \right|
    \\
    &\le
    C \sum_{j=0}^{\infty} \left( \frac{r}{2^j} \right)^{1-2/q} \\
    &\le
    C r^{1-2/q}.
\end{align}
Furthermore, it follows again from Morrey's inequality that 
\begin{align}
    | f(r_*, z) - f(r_*,0) | 
    \le
    C.
\end{align}
Now, by the above two estimates we have  
\begin{align}
    | f(r,z) - f(r_*, 0) |
    \le C (1+r)^{1-2/q}.
\end{align}
This completes the proof.
\end{proof}

For the case
$q =2$,
we recall the following estimate
proved by Gilbarg and Weinberger \cite[Lemma 2.1]{GiWe78}.
\begin{lemma}\label{lem:GiWe}
Let
$f = f(r,z) \in C^1((r_0,\infty) \times S^1)$
satisfy
\begin{align}
    \int_{-\pi}^{\pi} \int_{r_0}^{\infty} |\partial_r f(r,z)|^2 r \,drdz < \infty.
\end{align}
Then, we have
\begin{align}
    \lim_{r\to \infty} \frac{1}{\log r}
    \int_{-\pi}^{\pi} |f(r,z)|^2 \,dz = 0.
\end{align}
\end{lemma}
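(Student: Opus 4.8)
The plan is to reduce this two–dimensional statement to a one–variable differential inequality for the $z$–averaged quantity. First I would set
\begin{align}
    g(r) := \int_{-\pi}^{\pi} |f(r,z)|^2 \,dz,
    \qquad
    h(r) := \int_{-\pi}^{\pi} |\partial_r f(r,z)|^2 \,dz,
\end{align}
so that the hypothesis reads $\int_{r_0}^{\infty} h(\rho)\,\rho \,d\rho < \infty$. Since $f \in C^1$, the function $g$ is $C^1$ and differentiation under the integral sign together with the Cauchy--Schwarz inequality in $z$ gives
\begin{align}
    |g'(r)|
    = \left| 2 \int_{-\pi}^{\pi} f\,\partial_r f \,dz \right|
    \le 2\, g(r)^{1/2}\, h(r)^{1/2}.
\end{align}
This is the pointwise-in-$r$ inequality that drives the whole argument.

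Next I would remove the square root. For $\ep > 0$ put $G_\ep(r) := (g(r)+\ep)^{1/2}$, which is $C^1$ and satisfies $|G_\ep'(r)| = |g'(r)|/\bigl(2(g(r)+\ep)^{1/2}\bigr) \le h(r)^{1/2}$ by the estimate above. Integrating from a fixed $r_1 > r_0$ to $r$ and splitting $h^{1/2} = (h\rho)^{1/2}\rho^{-1/2}$, Cauchy--Schwarz in $\rho$ produces the logarithmic factor:
\begin{align}
    G_\ep(r) - G_\ep(r_1)
    \le \int_{r_1}^{r} h(\rho)^{1/2}\,d\rho
    \le \left( \int_{r_1}^{\infty} h(\rho)\,\rho\,d\rho \right)^{1/2}
        \bigl( \log(r/r_1) \bigr)^{1/2}.
\end{align}
Letting $\ep \to 0$, squaring, and using $(a+b)^2 \le (1+\del)a^2 + C_\del b^2$ (or simply expanding), I obtain, after dividing by $\log r$,
\begin{align}
    \frac{g(r)}{\log r}
    \le \frac{1}{\log r}\left[ g(r_1)^{1/2}
        + \left( \int_{r_1}^{\infty} h(\rho)\,\rho\,d\rho \right)^{1/2}
          \bigl( \log(r/r_1) \bigr)^{1/2} \right]^2.
\end{align}

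Finally I would take the limits in the correct order. As $r \to \infty$ with $r_1$ fixed, the first bracket term contributes $O(1/\log r) \to 0$, while the cross and leading terms contribute at most $\int_{r_1}^{\infty} h(\rho)\,\rho\,d\rho$ because $\log(r/r_1)/\log r \to 1$; hence $\limsup_{r\to\infty} g(r)/\log r \le \int_{r_1}^{\infty} h(\rho)\,\rho\,d\rho$. Since $\int_{r_0}^{\infty} h(\rho)\,\rho\,d\rho < \infty$, the tail on the right tends to $0$ as $r_1 \to \infty$, which forces $\limsup_{r\to\infty} g(r)/\log r = 0$, and as $g \ge 0$ this is the claimed limit. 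The only delicate points, which I expect to be the main (mild) obstacles, are the possible vanishing of $g$—handled by the regularization $G_\ep$—and the need to fix $r_1$, send $r \to \infty$ first, and only then let $r_1 \to \infty$, so that the uniform bound on the full Dirichlet-type integral is exploited through its tail rather than its total mass.
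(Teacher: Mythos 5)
Your argument is correct: the differential inequality $|g'(r)| \le 2 g(r)^{1/2} h(r)^{1/2}$, the $\ep$-regularization to handle zeros of $g$, the Cauchy--Schwarz splitting $h^{1/2} = (h\rho)^{1/2}\rho^{-1/2}$ producing the $\log$ factor, and the order of limits ($r\to\infty$ first, then $r_1\to\infty$ to exploit the vanishing tail) are all sound. The paper gives no proof of this lemma, merely citing Gilbarg--Weinberger [Lemma 2.1]; your argument is essentially the classical proof from that reference, so there is nothing to add.
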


\subsection{$L^q$-energy estimates}

\begin{lemma}\label{lem_en_est}
Suppose that the assumptions of Theorem \ref{thm_asymp} holds. 
Let $r_1 > r_0$,
and let $\alpha \in \re$ be
\begin{align}
    \alpha = \begin{dcases}
    q + 1 - 2/q & (q>2),\\
     1 & (q=2).
    \end{dcases}
\end{align}
Then, we have
\begin{align*}
	\int_{-\pi}^{\pi} \int_{r_1}^{\infty} r^{\alpha} |\Omega(r,z)|^{q-2} |\nabla \Omega(r,z)|^2 \,drdz
	\le
	C \int_{-\pi}^{\pi} \int_{r_0}^{\infty} r^{q+1} |\Omega(r,z)|^q \,drdz, 
\end{align*}
where $C = C(q, \alpha, r_0, r_1)$.  
\end{lemma}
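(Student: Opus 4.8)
The statement is a weighted $L^q$-energy (Caccioppoli-type) estimate for the scalar equation \eqref{eq:Omega}. The plan is to multiply \eqref{eq:Omega} by a test function of the form $\eta^2 r^{\beta} |\Omega|^{q-2}\Omega$, where $\eta = \eta(r)$ is a smooth cutoff vanishing for $r \le r_0$ (or near $r_1$) and equal to $1$ for large $r$, and $\beta$ is an exponent to be chosen so that the principal part produces exactly the weight $r^{\alpha}$ in front of $|\Omega|^{q-2}|\nabla\Omega|^2$. Recall the operator in \eqref{eq:Omega} is $-\Delta_{r,z}\Omega - (3/r)\partial_r\Omega + (v^r\partial_r + v^z\partial_z)\Omega = 0$, and that the measure relevant to the given Dirichlet bound is $r^{q+1}\,drdz$; so I expect $\beta$ to satisfy $\beta = \alpha$ once the integration by parts is carried out, with the $3/r$ drift supplying an extra power of $r$.

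The key steps, in order, are as follows. First I would integrate the equation against $\eta^2 r^{\beta}|\Omega|^{q-2}\Omega$ over $(r_0,\infty)\times S^1$; the periodicity in $z$ kills all boundary terms at $z=\pm\pi$, and the cutoff $\eta$ kills the inner boundary at $r=r_0$. Integrating the Laplacian by parts against $|\Omega|^{q-2}\Omega$ produces the good term $\int \eta^2 r^{\beta}|\Omega|^{q-2}|\nabla\Omega|^2$ (up to a factor $q-1$ coming from $\nabla(|\Omega|^{q-2}\Omega) = (q-1)|\Omega|^{q-2}\nabla\Omega$) together with weight-derivative terms coming from $\partial_r(\eta^2 r^{\beta})$. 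Second, I would combine the $\partial_r$ of the weight with the $(3/r)\partial_r\Omega$ drift; writing $|\Omega|^{q-2}\Omega\,\partial_r\Omega = \frac{1}{q}\partial_r(|\Omega|^q)$ turns all of these into terms of the form $\int (\text{smooth in }r)\,\partial_r(|\Omega|^q)$, which after one more integration by parts become $\int(\cdots)\,|\Omega|^q$ with weights of order $r^{\beta-1}$, i.e. controlled by the right-hand side once $\beta-1 \le q+1$, which holds for our choice of $\alpha$. Third, the convection term $\int \eta^2 r^{\beta}(v^r\partial_r + v^z\partial_z)\Omega\,|\Omega|^{q-2}\Omega$ is rewritten via $(v\cdot\nabla)\Omega\,|\Omega|^{q-2}\Omega = \frac{1}{q}(v\cdot\nabla)(|\Omega|^q)$ and integrated by parts; using the divergence-free condition $\partial_r v^r + v^r/r + \partial_z v^z = 0$ from \eqref{sns:axi:nosw}, the divergence of the two-dimensional transport collapses and leaves terms where the derivative falls on the weight $\eta^2 r^{\beta}$, producing $\int |\Omega|^q (|v^r| + |v^z|)\cdot r^{\beta-1}$-type integrals.

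The main obstacle will be controlling this convective contribution, since it involves the velocity $v$, on which we have no pointwise bound in the periodic setting. The device here is to use the pointwise growth bound $|f| \le C(1+r)^{1-2/q}$ from Lemma \ref{lem:morrey} (for $q>2$) applied to the velocity components — whose tangential derivatives lie in the weighted $L^q$ space by \eqref{eq:Diri} — so that $|v^r| + |v^z| \le C(1+r)^{1-2/q}$, and then to check that with $\alpha = q+1-2/q$ the resulting exponent on $|\Omega|^q$ matches the admissible weight $r^{q+1}$, absorbing the good gradient term is unnecessary because the convective term lands directly on a multiple of the right-hand side rather than on the Dirichlet-gradient term. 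For $q=2$ the pointwise bound is unavailable, so instead I would localize to dyadic annuli $r \sim R$ and use Lemma \ref{lem:GiWe} together with a Cauchy–Schwarz/Young splitting to absorb the convective term into $\varepsilon\int r^{\alpha}|\nabla\Omega|^2$ plus a harmless multiple of the right-hand side, taking $\alpha=1$; the logarithmic smallness in Lemma \ref{lem:GiWe} is what makes the absorption work in the borderline case. Finally, letting the cutoff parameter tend to its limit (and replacing $r_0$ by $r_1$ in the inner radius via the finiteness already recorded in \eqref{Diri_o}) yields the stated inequality with the claimed dependence of $C$.
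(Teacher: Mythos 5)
Your plan coincides with the paper's proof in all essential respects: the paper also tests the equation \eqref{eq:Omega} against a weighted power of $\Omega$ (phrased as the Gilbarg--Weinberger divergence identity \eqref{eq:div:form:2} with $h(\Omega)=|\Omega|^q$ and cutoff $r^{\alpha}\eta_R$), converts the drift $\tfrac{3}{r}\partial_r\Omega$ and the convection into $\partial_r(|\Omega|^q)$ and $v\cdot\nabla(|\Omega|^q)$ and integrates by parts, uses the divergence relation to reduce the interior convective term to $(\alpha-1)r^{\alpha-1}v^r|\Omega|^q$, and for $q>2$ invokes Lemma \ref{lem:morrey} to bound $|v^r|\le C(1+r)^{1-2/q}$ so that this term is majorized by the right-hand side. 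Your $q>2$ argument is correct and is the paper's argument.

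For $q=2$ there is one concrete gap. You correctly take $\alpha=1$ so that the interior convective term vanishes, leaving only the annulus term $\int h(\Omega)\,r\,v^r\,\partial_r\eta_R$ supported on $r\sim R$, and you propose to control it by Lemma \ref{lem:GiWe} plus ``Cauchy--Schwarz/Young splitting'' with absorption into $\varepsilon\int r|\nabla\Omega|^2$. But Lemma \ref{lem:GiWe} only controls $\|v^r(r,\cdot)\|_{L^2_z}$ (equivalently the mean $\overline{v^r}$), so after subtracting the mean the oscillating part must be paired with $|\Omega|^2$ via Cauchy--Schwarz in $z$, which requires an $L^4_z$ (or $L^\infty_z$) bound on $\Omega$ on the annulus; this is not available from $\int r^3|\Omega|^2<\infty$ alone, and recovering it costs another $z$-derivative of $\Omega$, leading to a circular absorption that your sketch does not resolve. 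The paper's device is to truncate the nonlinearity: take $h(\Omega)=|\Omega|^2$ for $|\Omega|\le A$ and $h(\Omega)=A(2|\Omega|-A)$ for $|\Omega|\ge A$, so that $|h(\Omega)|\le 2A|\Omega|$ makes the integrand \emph{linear} in $|\Omega|$; then Schwarz in $(r,z)$ together with the Wirtinger inequality $\|v^r-\overline{v^r}\|_{L^2_z}\le\|\partial_zv^r\|_{L^2_z}$ bounds the oscillating part by $CAR^{-2}\bigl(\int r^3|\Omega|^2\bigr)^{1/2}\|\nabla v\|_{L^2}$, which vanishes as $R\to\infty$; since the resulting bound on $\int_{|\Omega|\le A}r|\nabla\Omega|^2$ is independent of $A$, letting $A\to\infty$ gives the claim. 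You should either adopt this truncation or supply the missing control of $\sup_{r\in[R/2,R]}\|\Omega(r,\cdot)\|_{L^2_z}$ needed to close your absorption scheme.
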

\begin{proof}
Let
$r_2$ be
$r_1 > r_2 > r_0$, and let
$\xi_1(r), \xi_2(r) \in C^{\infty}((0,\infty))$
be nonnegative functions satisfying
\begin{align}
\label{eq:testfunc}
	\xi_1(r) = \begin{cases} 1 &(r \ge r_1),\\ 0 &(r_0< r \le r_2), \end{cases}\quad
	\xi_2(r) = \begin{cases}
	1 &(0\le r \le 1/2),\\
	\text{decreasing} &(1/2 < r < 1),\\
	0 &(r \ge 1). \end{cases}
\end{align}
For a parameter
$R > \max\{2r_0, 1\}$,
we define a cut-off function
\begin{align*}
	\eta_R(r) = \xi_1(r) \xi_2 \left( \frac{r}{R} \right).
\end{align*}
Then, we see that
\begin{align}\label{est_eta}
    \begin{aligned}
	|\nabla_{r,z} \eta_R(r)| &\le C (|\xi_1'(r)| + R^{-1}),\\
	|\Delta_{r,z} \eta_R(r)| &\le C (|\xi_1''(r)| + R^{-1}|\xi_1'(r)| + R^{-2} ).
	\end{aligned}
\end{align}

Let $h= h(\Omega)$ be a
$C^1$ and piecewise $C^2$ function determined later.
Based on the idea of \cite[p.385]{GiWe78},
we consider the following identity.
\begin{align*}
	&\diver_{r,z} \left[ (r^{\alpha} \eta_R) \nabla_{r,z}h(\Omega)
					- h(\Omega) \nabla_{r,z} (r^{\alpha} \eta_R )
					- (r^{\alpha} \eta_R) h(\Omega) v \right] \\
	&= r^{\alpha} \eta_R h''(\Omega) |\nabla_{r,z} \Omega|^2
		- h(\Omega) \left[ \Delta_{r,z} (r^{\alpha} \eta_R)
						+ v \cdot \nabla_{r,z} (r^{\alpha} \eta_R ) \right] \\
	&\quad + r^{\alpha} \eta_R h'(\Omega) \left[ \Delta_{r,z} \Omega - v \cdot \nabla_{r,z} \Omega \right]
		- r^{\alpha} \eta_R h(\Omega) \diver_{r,z} v.
\end{align*}
A straightforward computation shows
\begin{align}\label{eq:deri:test}
    \begin{aligned}
    \Delta_{r, z}\left(r^{\alpha} \eta_{R}\right)
    &=
    \alpha(\alpha-1) r^{\alpha-2} \eta_{R}
    +2 \alpha r^{\alpha-1} \partial_{r} \eta_{R}
    +r^{\alpha} \partial_r^2 \eta_R,\\
    v \cdot \nabla_{r, z}
    \left(r^{\alpha} \eta_{R}\right)
    &=
    \alpha r^{\alpha-1} v^{r} \eta_{R}
    +r^{\alpha} v^r \partial_r \eta_{R}.
    \end{aligned}
\end{align}
Applying the equation \eqref{eq:Omega},
we calculate
\begin{align}\label{eq:divform:eqOmega}
	& r^{\alpha} \eta_R h'(\Omega) \left[ \Delta_{r,z} \Omega - v \cdot \nabla_{r,z} \Omega \right] \\
	&= r^{\alpha} \eta_R h'(\Omega) \left( - \frac{3}{r} \partial_r \Omega \right) \\
	&= - 3 \partial_r \left( r^{\alpha-1} \eta_R h(\Omega) \right)
		+ 3 \partial_r \left( r^{\alpha-1} \eta_R \right) h(\Omega),\\
	&=
	- 3 \partial_r \left( r^{\alpha-1} \eta_R h(\Omega) \right)
    + 3(\alpha-1) r^{\alpha-2} \eta_{R} h(\Omega)
    + 3 r^{\alpha-1} \partial_{r} \eta_{R} h(\Omega).
\end{align}
Also, from the third line of \eqref{sns:axi:nosw}, we obtain
\begin{align}\label{eq:divform:divv}
    - r^{\alpha} \eta_R h(\Omega) \diver_{r,z} v = r^{\alpha-1} \eta_R h(\Omega) v^r.
\end{align}
Therefore, we see from
\eqref{eq:deri:test},
\eqref{eq:divform:eqOmega},
and
\eqref{eq:divform:divv}
that
\begin{align}
\label{eq:div:form:2}
	&\diver_{r,z} \left[ (r^{\alpha} \eta_R) \nabla_{r,z}h(\Omega)
					- h(\Omega) \nabla_{r,z} (r^{\alpha} \eta_R )
					- (r^{\alpha} \eta_R) h(\Omega) v \right] \\
	&\quad 
	+ 3 \partial_r (r^{\alpha-1} \eta_R h(\Omega)) \\
	&= r^{\alpha} \eta_R h''(\Omega) |\nabla_{r,z} \Omega|^2
		- h(\Omega) \left[ r^{\alpha} \partial_{r}^2 \eta_R + (2\alpha-3) r^{\alpha-1} \partial_r \eta_R \right] \\
	&\quad - h(\Omega) \left[ r^{\alpha} v \partial_{r} \eta_R + (\alpha-1) r^{\alpha-1} v^r \eta_R \right] \\
	&\quad
	-(\alpha-3)(\alpha-1) h(\Omega) r^{\alpha-2} \eta_R.
\end{align}

Now, we divide the proof into two cases for 
$q>2$ and for $q=2$.
First, when
$q > 2$,
we take
\begin{align}
    h(\Omega) = |\Omega|^q. 
\end{align}
Integrating both sides of the above identity (\ref{eq:div:form:2}) over
$D \times S^1$, 
we have by periodicity in $z$-direction that 
\begin{align*}
	&q(q-1) \int_{D\times S^1} r^{\alpha} \eta_R |\Omega(r,z)|^{q-2} |\nabla_{r,z} \Omega(r,z) |^2 \,drdz \\
	&= \int_{D\times S^1} |\Omega(r,z)|^q
		\left[ r^{\alpha} \partial_{r}^2 \eta_R + (2\alpha - 3) r^{\alpha-1} \partial_r \eta_R \right] \, drdz \\
	&\quad + \int_{D\times S^1}  |\Omega(r,z)|^q
		\left[ r^{\alpha} v \partial_{r} \eta_R + (\alpha-1) r^{\alpha-1} v^r \eta_R \right] \,drdz \\
	&\quad + (\alpha - 3)(\alpha-1) \int_{D\times S^1} |\Omega(r,z)|^q r^{\alpha-2} \eta_R \,drdz.
\end{align*}
Applying \eqref{est_eta}
and Lemma \ref{lem:morrey},
we have by the property of the support of $\eta_R$ and its derivatives that  
\begin{align*}%
	&\int_{D\times S^1} r^{\alpha} \eta_R |\Omega(r,z)|^{q-2} |\nabla_{r,z} \Omega(r,z) |^2 \,drdz \\
	&\le
	C \sum_{l=0}^2 R^{-l}
	\int_{-\pi}^{\pi} \int_{\frac{R}{2}}^{R}  r^{\alpha-2 + l} |\Omega(r,z)|^q \,drdz \\
	&\quad
	+ C R^{-1}
	\int_{-\pi}^{\pi} \int_{\frac{R}{2}}^{R} 
	r^{1-2/q+\alpha} |\Omega(r,z)|^q \,drdz \\
	&\quad
	+ C \int_{-\pi}^{\pi} \int_{r_0}^{R} 
	    (r^{-2/q+\alpha}+r^{\alpha-2})
	    |\Omega(r,z)|^q \,drdz \\
	&\quad
	+ C \int_{-\pi}^{\pi} \int_{r_2}^{r_1}
	|\Omega(r,z)|^q \,drdz \\
	&\le C(q, \alpha, r_0, r_1) \int_{D\times S^1}
	r^{-2/q+\alpha} |\Omega(r,z)|^q \,drdz.
\end{align*}%
From the assumption
$\alpha = q+1+2/q$,
we obtain
\begin{align*}%
	&\int_{D\times S^1} r^{\alpha} \eta_R
	|\Omega(r,z)|^{q-2} |\nabla_{r,z} \Omega(r,z) |^2
	\,drdz 
	\le
	C \int_{D\times S^1}
	r^{q+1} |\Omega(r,z)|^q \,drdz.
\end{align*}%
Finally, noting
$\eta_R = 1$
on $[r_1, R/2]$
and letting $R\to \infty$,
we conclude that 
\begin{align*}%
	\int_{D_1\times S^1} r^{\alpha} |\Omega(r,z)|^{q-2} |\nabla_{r,z} \Omega(r,z) |^2 \,drdz
	\le
	C \int_{D\times S^1}
	r^{q+1} |\Omega(r,z)|^q \,drdz.
\end{align*}%

Next, we consider the case
$q=2$.
In this case, we take
$\alpha = 1$
and
\begin{align}
    h(\Omega) = 
    \begin{dcases}
    |\Omega|^2 &(|\Omega| \le A),\\
    A(2|\Omega| - A) &(|\Omega| \ge A).
    \end{dcases}
\end{align}
with a positive constant $A$.
Then, $h(\Omega)$ is $C^1$ and piecewise $C^2$,
and
$|h(\Omega)| \le \min\{ |\Omega|^2, 2A |\Omega| \}$
holds.
In this case, the identity \eqref{eq:div:form:2} has the form
\begin{align}
    &\diver_{r,z} \left[ (r \eta_R) \nabla_{r,z}h(\Omega)
					- h(\Omega) \nabla_{r,z} (r \eta_R )
					- (r \eta_R) h(\Omega) v \right]
					+ 3 \partial_r (\eta_R h(\Omega)) \\
	&= r \eta_R h''(\Omega) |\nabla_{r,z} \Omega|^2
		- h(\Omega) \left[ r \partial_{r}^2 \eta_R
		- \partial_r \eta_R \right] \\
	&\quad - h(\Omega) r v^r \partial_r \eta_R.
\end{align}
Integrating both sides of the identity above over $D\times S^1$,
we have
\begin{align}\label{eq:energy:q=2}
    2 \int_{D\times S^1 \atop |\Omega| \le A}
    r \eta_R |\nabla_{r,z} \Omega|^2
    \,drdz
    &=
     \int_{D\times S^1}
     h(\Omega) \left[ r \partial_{r}^2 \eta_R
		- \partial_r \eta_R \right]\,drdz\\
	&\quad
	+
	\int_{D\times S^1}
	h(\Omega) r v^r \partial_r \eta_R \,drdz.
\end{align}
The first term of the right-hand side
can be estimated in the same way as before.
For the second term,
we put
\begin{align}
    \overline{v^r} =
    \frac{1}{2\pi} \int_{-\pi}^{\pi} v^r(r,z) \,dz.
\end{align}
Then, we have
\begin{align}
    \int_{D\times S^1}
	h(\Omega) r v^r \partial_r \eta_R \,drdz
	&=
	\int_{D\times S^1}
	h(\Omega) r (v^r-\overline{v^r}) \partial_r \eta_R \,drdz \\
	&\quad 
	+ \overline{v^r}
	\int_{D\times S^1}
	h(\Omega) r \partial_r \eta_R \,drdz \\
	&=: I + I\!I.
\end{align}
For the term $I$,
we use
\eqref{est_eta}
and
$|h(\Omega)| \le \min\{ |\Omega|^2, 2A |\Omega|\}$,
and
we apply the Schwarz and the Wirtinger inequality
to obtain
\begin{align}
    |I| &\le
    C A R^{-2} \left( \int_{-\pi}^{\pi} \int_{\frac{R}{2}}^R r^3 |\Omega|^2 \,drdz \right)^{1/2}
    \left(
    \int_{\frac{R}{2}}^R
       r \left(
       \int_{-\pi}^{\pi} |\partial_{z}v^r|^2 \,dz
       \right)
       \,dr
    \right)^{1/2} \\
    &\quad
    + C \int_{-\pi}^{\pi} \int_{r_2}^{r_1}
	|\Omega(r,z)|^2 \,drdz\\
    &\le
    C A R^{-2}
    \left( \int_{-\pi}^{\pi} \int_{\frac{R}{2}}^R r^3 |\Omega|^2 \,drdz \right)^{1/2}
    \| \nabla v \|_{L^2} \\
    &\quad
    + C \int_{-\pi}^{\pi} \int_{r_2}^{r_1}
	|\Omega(r,z)|^2 \,drdz.
\end{align}
For the term
$I\!I$,
we use
\eqref{est_eta} and
$|h(\Omega)| \le |\Omega|^2$,
and we apply Lemma \ref{lem:GiWe}
to deduce
\begin{align}
    |I\!I|
    &\le
    C (\log R)^{1/2} R^{-3}
    \int_{-\pi}^{\pi} \int_{\frac{R}{2}}^{R}
    r^3 |\Omega|^2 \,drdz
    + C \int_{-\pi}^{\pi} \int_{r_2}^{r_1}
	|\Omega(r,z)|^2 \,drdz
\end{align}
Applying these estimates to \eqref{eq:energy:q=2},
we conclude
\begin{align}
    \int_{D\times S^1 \atop |\Omega| \le A}
    r \eta_R |\nabla_{r,z} \Omega|^2
    \,drdz
    &\le
    C A R^{-2}
    \left( \int_{-\pi}^{\pi} \int_{\frac{R}{2}}^R r^3 |\Omega|^2 \,drdz \right)^{1/2}
    \| \nabla v \|_{L^2} \\
    &\quad
    + C (\log R)^{1/2} R^{-3}
    \int_{-\pi}^{\pi} \int_{\frac{R}{2}}^{R}
    r^3 |\Omega|^2 \,drdz \\
    &\quad
    + C \int_{-\pi}^{\pi} \int_{r_2}^{r_1}
	|\Omega(r,z)|^2 \,drdz.
\end{align}
Letting
$R \to \infty$
yields
\begin{align}
     \int_{D_1 \times S^1 \atop |\Omega| \le A}
    r |\nabla_{r,z} \Omega|^2
    \,drdz
    \le
    C \int_{-\pi}^{\pi} \int_{r_2}^{r_1}
	|\Omega(r,z)|^2 \,drdz.
\end{align}
Since the right-hand side is independent of $A$,
we obtain
\begin{align}
    \int_{-\pi}^{\pi} \int_{r_1}^{\infty}
    r |\nabla_{r,z} \Omega|^2
    \,drdz
    \le
    C \int_{-\pi}^{\pi}\int_{r_0}^{\infty} r^3
	|\Omega(r,z)|^2 \,drdz
\end{align}
This completes the proof of Lemma \ref{lem_en_est}.
\end{proof}
\begin{remark}
In the case $q=2$,
we have to take
$\alpha = 1$,
otherwise the term
\begin{align}
    (\alpha -1) \int_{D\times S^1} |\Omega(r,z)|^2
    r^{\alpha-1} v^r \eta_R \,drdz
\end{align}
remains in the identity \eqref{eq:energy:q=2}.
When $\alpha \neq 1$,
the above proof does not seem to work well.
\end{remark}

\subsection{Pointwise behavior via maximum principle}

From \eqref{Diri_o} and Lemma \ref{lem_en_est},
we have
\begin{align*}%
	\int_{-\pi}^{\pi} \int_{r_1}^{\infty} r^{q+1} |\Omega(r,z)|^q \,drdz
	< \infty,\quad
	\int_{-\pi}^{\pi} \int_{r_1}^{\infty} r^{\alpha} |\Omega(r,z)|^{q-2} |\nabla \Omega(r,z)|^2 \,drdz < \infty
\end{align*}%
with
\begin{align}
    \alpha = \begin{dcases}
    q + 1 - 2/q & (q>2),\\
     1 & (q=2).
    \end{dcases}
\end{align}

The following proposition shows that
the above bounds with the maximum principle yield a pointwise behavior of $\Omega$
as $r \to \infty$.

\begin{proposition}\label{prop_pt}
Let $r_1 > 0$,
and let
$f = f(r,z) \in C^1((r_1,\infty) \times S^1)$
be $2\pi$-periodic in $z$
and satisfy
\begin{align}%
\label{ass_f}
	\int_{-\pi}^{\pi} \int_{r_1}^{\infty} r^{\alpha} | f (r,z)|^{q-2} |\nabla_{r,z} f(r,z) |^2 \,drdz
	+ \int_{-\pi}^{\pi} \int_{r_1}^{\infty} r^{q+1} | f(r,z)|^q \,drdz < \infty
\end{align}%
with some
$q \in [2,\infty)$ and $\alpha \in \re$.
Moreover, we assume that $f$ satisfies the maximum principle in
$(r_1,\infty) \times S^1$,
that is,
for every 
$\rho_1, \rho_2 \in (r_1, \infty)$
with $\rho_1 < \rho_2$,
the function
$f$
restricted in
$[\rho_1, \rho_2] \times S^1$
attains its maximum at the boundary
$r = \rho_1$ or $r = \rho_2$.
Then, we have
\begin{align*}
	\lim_{r\to \infty} r^{\beta/q} \sup_{-\pi \le z \le \pi} |f(r,z)| = 0
\end{align*}
with
$\beta = \min\{ q + 2, \frac{\alpha + q +3}{2} \}$.
\end{proposition}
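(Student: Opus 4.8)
The plan is to reduce the whole statement to the one-variable function $m(r) := \sup_{z\in[-\pi,\pi]}|f(r,z)|$ and to combine a one-dimensional oscillation estimate in $z$ with the maximum principle. First I would extract the geometry of $m$: since $f$ satisfies the maximum principle and the same applies to $-f$ (equivalently, to $|f|$), the function $m$ has no interior local maximum on $(r_1,\infty)$. Indeed, if $m(b)>\max\{m(a),m(c)\}$ for some $r_1<a<b<c$, then $|f|$ would attain its maximum over $[a,c]\times S^1$ in the interior $\{r=b\}$, contradicting the hypothesis. A continuous function with this property is quasiconvex (for $a<b<c$ one has $m(b)\le\max\{m(a),m(c)\}$), so there is a point $r^\ast\in(r_1,\infty]$ with $m$ nonincreasing on $(r_1,r^\ast)$ and nondecreasing on $(r^\ast,\infty)$. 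In particular $m$ is eventually monotone and $\lim_{r\to\infty}m(r)$ exists in $[0,\infty]$. Note $m$ is continuous since $f\in C^1$ and $S^1$ is compact.

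Next I would prove the pointwise-to-integral oscillation bound. Set
\[
F(r)=\int_{-\pi}^{\pi}|f(r,z)|^q\,dz,\qquad
G(r)=\int_{-\pi}^{\pi}|f(r,z)|^{q-2}|\partial_z f(r,z)|^2\,dz .
\]
Using $\partial_z|f|^q=q\,|f|^{q-2}f\,\partial_z f$ (valid for $q\ge2$), the fundamental theorem of calculus on $S^1$, and the Cauchy--Schwarz inequality, the oscillation of $|f(r,\cdot)|^q$ in $z$ is controlled by $q\,F(r)^{1/2}G(r)^{1/2}$. Combining this with $\sup_z|f|^q\le \tfrac{1}{2\pi}F(r)+\operatorname{osc}_z|f(r,\cdot)|^q$ gives
\[
m(r)^q \le \frac{1}{2\pi}F(r)+q\,F(r)^{1/2}G(r)^{1/2}.
\]

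Then I multiply by $r^{\beta-1}$ and integrate over $(r_1,\infty)$. For the first term, the bound $\beta\le q+2$ gives $r^{\beta-1}F(r)\le r^{q+1}F(r)$ for $r\ge1$, which is integrable by the second integral in \eqref{ass_f}. For the cross term I split the weight as $r^{\beta-1}=r^{a/2}r^{b/2}$ with $a=q+1$ and $b=2(\beta-1)-(q+1)$ and apply Cauchy--Schwarz in $r$; the constraint $\beta\le\frac{\alpha+q+3}{2}$ is exactly what yields $b\le\alpha$, so that $\int r^{a}F\,dr$ and $\int r^{b}G\,dr$ are both finite by \eqref{ass_f} (here $G(r)\le\int_{-\pi}^{\pi}|f|^{q-2}|\nabla_{r,z}f|^2\,dz$). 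This produces the key integral bound $\int_{r_1}^{\infty}r^{\beta-1}m(r)^q\,dr<\infty$, and one sees that $\beta=\min\{q+2,\frac{\alpha+q+3}{2}\}$ is precisely the largest exponent making both terms integrable.

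Finally I convert this to a pointwise limit via the eventual monotonicity. Note $\beta>0$ for every $q\ge2$. If $m$ is eventually nondecreasing, then either $m\to0$ (whence $m\equiv0$ for large $r$ and the claim is trivial), or $m(r)\ge c>0$ for large $r$, forcing $\int r^{\beta-1}m^q\,dr\ge c^q\int r^{\beta-1}\,dr=\infty$ and contradicting the integral bound. If instead $m$ is nonincreasing on $(r^\ast,\infty)$, then for large $R$, using $m(r)\ge m(R)$ on $[R/2,R]$ and $\int_{R/2}^{R}r^{\beta-1}\,dr=c_\beta R^\beta$ with $c_\beta>0$, we get $c_\beta R^\beta m(R)^q\le\int_{R/2}^{R}r^{\beta-1}m(r)^q\,dr\to0$ as $R\to\infty$ (tail of a convergent integral). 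In either case $r^\beta m(r)^q\to0$, and taking $q$-th roots gives the assertion. The main obstacle is the first step---deducing quasiconvexity, hence eventual monotonicity, of $\sup_z|f|$ from the maximum principle---together with the weighted Cauchy--Schwarz bookkeeping that pins down exactly the exponent $\beta$; the integral-to-pointwise conversion is then routine.
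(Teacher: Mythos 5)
Your argument is correct in substance but takes a genuinely different route from the paper's. The paper works dyadically: rewriting the integrand of \eqref{ass_f} with an extra factor $1/r$ and using the mean value theorem for integrals, it selects radii $r_n\in[2^n,2^{n+1}]$ along which the slice quantity $\int_{-\pi}^{\pi}|f|^{q-2}\bigl(r^{q+2}|f|^2+r^{(\alpha+q+3)/2}|f|\,|\nabla_{r,z}f|\bigr)\,dz$ tends to zero, controls $r_n^{\beta}\sup_z|f(r_n,z)|^q$ by that slice quantity via the same fundamental-theorem-of-calculus/averaging device you use, and then invokes the maximum principle only to interpolate between consecutive good radii $r_n,r_{n+1}$ (which differ by at most a factor $4$, so the weight $r^{\beta/q}$ is comparable across the strip). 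You instead integrate your oscillation bound against $r^{\beta-1}\,dr$ globally, obtaining $\int_{r_1}^{\infty}r^{\beta-1}m(r)^q\,dr<\infty$ by a weighted Cauchy--Schwarz whose exponent bookkeeping correctly pins down $\beta=\min\{q+2,\tfrac{\alpha+q+3}{2}\}$, and you use the maximum principle in a stronger structural way, to deduce quasiconvexity and hence eventual monotonicity of $m(r)=\sup_z|f(r,z)|$, which converts the integral bound into pointwise decay. Both proofs apply the maximum principle to $|f|$ rather than to $f$ alone (the paper does so implicitly in its final display), so you are not assuming more than the paper there. Your route is arguably cleaner and yields extra information (unimodality of $m$), but it is slightly less general in one respect: the contradiction in your ``eventually nondecreasing'' case needs $\int^{\infty}r^{\beta-1}\,dr=\infty$, i.e.\ $\beta\ge 0$, and your assertion that $\beta>0$ for every $q\ge 2$ is false when $\alpha\le-(q+3)$, whereas the paper's interpolation between $r_n$ and $r_{n+1}$ works for any sign of $\beta$. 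Since the proposition is only applied with $\alpha>0$ (hence $\beta>0$), this costs nothing in practice, but you should either state the restriction $\beta>0$ or treat $\beta\le 0$ separately.
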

\begin{proof}
For $n \in \mathbb{N}$ satisfying
$2^n > r_1$,
by the assumption and the Schwarz inequality, we have
\begin{align}
    \infty &>
    \int_{2^n}^{2^{n+1}} \int_{-\pi}^{\pi}
    |f|^{q-2} \left( r^{q+1} |f|^2 +r^{\alpha} |\nabla_{r,z} f|^2 \right) \,dzdr \\
    &\ge
    \int_{2^n}^{2^{n+1}} \int_{-\pi}^{\pi}
    |f|^{q-2} \left( r^{q+1} |f|^2 + r^{\frac{\alpha+q+1}{2}} | f | |\nabla_{r,z} f| \right)\,dzdr \\
    &=
    \int_{2^n}^{2^{n+1}} \frac{dr}{r}
    \int_{-\pi}^{\pi} |f|^{q-2} \left(
        r^{q+2} |f|^2 + r^{\frac{\alpha+q+3}{2}} | f | |\nabla_{r,z} f|
    \right) \,dz.
\end{align}
By the mean value theorem for integration,
there exists a sequence
$r_n \in [2^n, 2^{n+1}]$
such that the right-hand side of the above equals to
\begin{align}
    \log 2 \int_{-\pi}^{\pi}
    |f(r_n,z)|^{q-2}
    \left( 
    r_n^{q+2} |f(r_n,z)|^2
    + r_n^{\frac{\alpha+q+3}{2}}
    |f(r_n,z)| | \nabla_{r,z} f(r_n,z)|
    \right) \,dz.
\end{align}
By the fundamental theorem of calculus, for $z_1, z_2 \in S^1$ 
we have with
$\beta = \min\{ q+2, \frac{\alpha+q+3}{2} \}$ that 
\begin{align}
    r_n^{\beta} |f(r_n,z_1)|^q - r_n^{\beta} |f(r_n,z_2)|^q
    &=
    r_n^{\beta} \left| \int_{z_2}^{z_1} \partial_z |f(r_n,z)|^q \,dz \right| \\
    &\le C r_n^{\beta}
    \int_{-\pi}^{\pi} |f(r_n,z)|^{q-1} |\nabla_{r,z} f(r_n,z)| \,dz.
\end{align}
Integrating both sides of the above inequality on
$[-\pi,\pi]$
with respect to $z_2$,
we deduce
\begin{align}
    r_n^{\beta} | f(r_n,z_1)|^q
    &\le C r_n^{\beta} \int_{-\pi}^{\pi} | f(r_n,z)|^q \,dz \\
    &\quad
    + C r_n^{\beta}
    \int_{-\pi}^{\pi} |f(r_n,z)|^{q-1} |\nabla_{r,z} f(r_n,z)| \,dz.
\end{align}
Noting
$\beta = \min\{ q + 2, \frac{\alpha + q +3}{2} \}$
and by the Lebesgue dominated convergence theorem,
we have
\begin{align}
    \lim_{n\to \infty} r_n^{\beta}
    \sup_{-\pi \le z \le \pi} | f(r_n,z)|^q = 0.
\end{align}

For
$r \in (r_1, \infty)$,
we take $n\in \mathbb{N}$ so that
$r \in [r_n, r_{n+1}]$
and apply the maximum principle to obtain
\begin{align}
    \sup_{[r_n, r_{n+1}] \times S^1} r^{\beta/q} |f(r,z)|
    &\le
    C \max \left\{
        \sup_{-\pi \le z \le \pi} r_n^{\beta/q} |f(r_n,z)|, \ 
        \sup_{-\pi \le z \le \pi} r_{n+1}^{\beta/q} |f(r_{n+1},z)|
    \right\} \\
    &\to 0\quad (n\to \infty).
\end{align}
This implies
\begin{align}
    \lim_{r\to \infty} r^{\beta/q}
    \sup_{-\pi \le z \le \pi} | f(r,z) | = 0,
\end{align}
and the proof is complete.
\end{proof}

\subsection{Proof of Theorem \ref{thm_asymp}}
By the assumptions on Theorem \ref{thm_asymp}
and the result of Lemma \ref{lem_en_est},
we can apply Proposition \ref{prop_pt}
with
\begin{align}
    \alpha = \begin{dcases}
    q + 1 - 2/q & (q>2),\\
     1 & (q=2).
    \end{dcases}
\end{align}
In this case, we have
\begin{align}
    \beta
    &=
    \min \left\{ q + 2, \frac{\alpha + q +3}{2} \right\}
    =
    \begin{dcases}
    q+2-\frac{1}{q} & (q>2),\\
    3 &(q=2).
    \end{dcases}
\end{align}
Therefore, we obtain
\begin{align}
    \begin{dcases}
    \lim_{r\to \infty} r^{ 1+ 2/q - 1/q^2}
	\sup_{z \in [-\pi, \pi]} |\Omega(r,z)| = 0
	&(q>2),\\
	\lim_{r\to \infty} r^{3/2}
	\sup_{z \in [-\pi, \pi]} |\Omega(r,z)| = 0
	&(q=2).
    \end{dcases}
\end{align}
Concerning the estimate of  $\omega^{\theta}$, we have by the relation 
$\omega^{\theta} = r \Omega$ that 
\begin{align}
    \begin{dcases}
    \lim_{r\to \infty} r^{2/q - 1/q^2}
	\sup_{z \in [-\pi, \pi]} |\omega^{\theta} (r,z)| = 0
	&(q>2),\\
	\lim_{r\to \infty} r^{1/2}
	\sup_{z \in [-\pi, \pi]} |\omega^{\theta} (r,z)| = 0
	&(q=2).
    \end{dcases}
\end{align}
This completes the proof of Theorem \ref{thm_asymp}.

\section{Proof of Theorem \ref{thm:liouville}}
We first remark that
the maximum principle for $\Omega$
cannot be directly applicable
due to the presence of $z$-axis.
Therefore, we repeat a similar argument of
the proof of Lemma \ref{lem_en_est}
with $r_0 = 0$.
We first note that, corresponding to the property 
\eqref{Diri_o}, we have
\begin{align}\label{eq:li:omega}
    \int_{-\pi}^{\pi} \int_0^{\infty} r^{q+1} |\Omega|^q \,drdz < \infty.
\end{align}
Let us first take $\alpha \in (1,3)$,
and let
$h(\Omega)$
be a nonnegative, $C^1$, and piecewise $C^2$ function
such that
$h''$ is locally bounded.
We use the cut-off function
$\xi_2$
in \eqref{eq:testfunc},
and for $R>0$ to define
\begin{align}
    \eta_R (r,z) = \xi_2 \left( \frac{r}{R} \right). 
\end{align}
In the same way to the proof of Lemma \ref{lem_en_est},
we start with the same identity as
\eqref{eq:div:form:2}:
\begin{align}
\label{eq:li:id}
	&\diver_{r,z} \left[ (r^{\alpha} \eta_R) \nabla_{r,z}h(\Omega)
					- h(\Omega) \nabla_{r,z} (r^{\alpha} \eta_R )
					- (r^{\alpha} \eta_R) h(\Omega) v \right] \\
	&\quad 
	+ 3 \partial_r (r^{\alpha-1} \eta_R h(\Omega)) \\
	&= r^{\alpha} \eta_R h''(\Omega) |\nabla_{r,z} \Omega|^2
		- h(\Omega) \left[ r^{\alpha} \partial_{r}^2 \eta_R + (2\alpha-3) r^{\alpha-1} \partial_r \eta_R \right] \\
	&\quad - h(\Omega) \left[ r^{\alpha} v \partial_{r} \eta_R + (\alpha-1) r^{\alpha-1} v^r \eta_R \right] \\
	&\quad
	-(\alpha-3)(\alpha-1) h(\Omega) r^{\alpha-2} \eta_R.
\end{align}
Now, we integrate both sides of the above identity over
$[0,R] \times S^1$.
Since $\alpha > 1$,
we have
\begin{align}
    \left. \left(
    r^{\alpha} \eta_{R} \nabla_{r, z} h(\Omega)
    -h(\Omega) \nabla_{r,z}
    \left(r^{\alpha} \eta_{R}\right)
    -\left(r^{\alpha} \eta_{R}\right) h\left(\Omega \right) v
    +3 r^{\alpha-1} \eta_R h(\Omega) 
    \right) \right|_{r=0} = 0.
\end{align}
From this and periodicity in $z$-direction,
we deduce
\begin{align}
	&\int_{-\pi}^{\pi} \int_0^R r^{\alpha}
	\eta_R h''(\Omega) |\nabla_{r,z} \Omega(r,z) |^2
	\,drdz \\
	&= \int_{-\pi}^{\pi} \int_0^R h(\Omega)
		\left[ r^{\alpha} \partial_r^2 \eta_R
		+ (2\alpha -3) r^{\alpha-1} \partial_r \eta_R
		\right] \, drdz \\
	&\quad + \int_{-\pi}^{\pi} \int_0^R h(\Omega)
		\left[
		r^{\alpha} v^r \partial_r \eta_R
		+ (\alpha-1) r^{\alpha-1} v^r \eta_R
		\right]\,drdz \\
	&\quad
	+ (\alpha-3)(\alpha-1)\int_{-\pi}^{\pi} \int_{0}^{R}
	 r^{\alpha-2}
	h(\Omega) \eta_{R} \,drdz.
\end{align}
Since $h(\Omega)$ is nonnegative
and $\alpha \in (1,3)$,
the last term can be dropped,
and we obtain
\begin{align}
	&\int_{-\pi}^{\pi} \int_0^R r^{\alpha}
	\eta_R h''(\Omega) |\nabla_{r,z} \Omega(r,z) |^2
	\,drdz \\
	&\le \int_{-\pi}^{\pi} \int_0^R h(\Omega)
		\left[ r^{\alpha} \partial_r^2 \eta_R
		+ (2\alpha -3) r^{\alpha-1} \partial_r \eta_R
		\right] \, drdz \\
	&\quad + \int_{-\pi}^{\pi} \int_0^R h(\Omega)
		\left[
		r^{\alpha} v^r \partial_r \eta_R
		+ (\alpha-1) r^{\alpha-1} v^r \eta_R
		\right]\,drdz.
\end{align}
Moreover, since
$h''$ is locally bounded,  
by letting
$\alpha \to 1+0$ in the above estimate, 
we have
\begin{align}
\label{eq:li:est2}
	&\int_{-\pi}^{\pi} \int_0^R r
	\eta_R h''(\Omega) |\nabla_{r,z} \Omega(r,z) |^2
	\,drdz \\
	&\le \int_{-\pi}^{\pi} \int_0^R h(\Omega)
		\left[ r \partial_r^2 \eta_R
		- \partial_r \eta_R
		\right] \, drdz \\
	&\quad + \int_{-\pi}^{\pi} \int_0^R h(\Omega)
		r v^r \partial_r \eta_R \,drdz.
\end{align}

Now, we estimate the right-hand side.
For the case $q > 2$,
we take
$h(\Omega) = |\Omega|^q$.
Then, by using
\eqref{est_eta} and Lemma \ref{lem:morrey},
the right-hand side of \eqref{eq:li:est2}
is estimated by
\begin{align*}
    &C \int_{-\pi}^{\pi} \int_{R/2}^R 
    |\Omega (r,z)|^q
    \left(
    r R^{-2} + R^{-1} + r (1+r)^{1-2/q} R^{-1}
    \right) \,dr dz.
\end{align*}
Using the condition \eqref{eq:li:omega},
we easily see that the above integral
tends to $0$ as $R\to \infty$.
Namely, letting
$R \to \infty$ in \eqref{eq:li:est2},
we have
\begin{align}
    \int_{-\pi}^{\pi} \int_0^{\infty}
	r |\Omega(r,z)|^{q-2} 
	|\nabla_{r,z} \Omega(r,z) |^2 \,drdz = 0.
\end{align}
This implies
$\nabla |\Omega|^{q/2} = 0$
and $\Omega$ is constant.
By the bound \eqref{eq:li:omega}, we have
$\Omega = 0$,
and so is
$\omega^{\theta}$.

Next, when
$q = 2$,
we take
\begin{align}
    h(\Omega) = 
    \begin{dcases}
    |\Omega|^2 &(|\Omega| \le A),\\
    A(2|\Omega| - A) &(|\Omega| \ge A),
    \end{dcases}
\end{align}
with $A>0$, and perform the completely same argument as in
the proof of Lemma \ref{lem_en_est}.
Hence, we can obtain
\begin{align}
    \int_{(0,\infty) \times S^1 \atop |\Omega| \le A}
    r \eta_R |\nabla_{r,z} \Omega|^2
    \,drdz
    &\le
    C A R^{-2}
    \left( \int_{-\pi}^{\pi} \int_{\frac{R}{2}}^R r^3 |\Omega|^2 \,drdz \right)^{1/2}
    \| \nabla v \|_{L^2} \\
    &\quad
    + C (\log R)^{1/2} R^{-3}
    \int_{-\pi}^{\pi} \int_{\frac{R}{2}}^{R}
    r^3 |\Omega|^2 \,drdz.
\end{align}
Letting first $R\to \infty$, and then $A \to \infty$,
we have
\begin{align}
    \int_{-\pi}^{\pi} \int_0^{\infty}
	r |\nabla_{r,z} \Omega(r,z) |^2 \,drdz = 0,
\end{align}
which also implies
$\nabla \Omega = 0$.
Thus, in the same way as before,
we see that
$\omega^{\theta} = 0$.

Finally, the formula
\begin{align}
    0 = \nabla \times (\nabla \times v)
    = \nabla (\nabla \cdot v) - \Delta v
\end{align}
and $\nabla \cdot v = 0$
lead to $\Delta v = 0$.
Therefore, the assumption
$\nabla v \in L^q (\re^2 \times S^1)$
and the periodicity of $v$ with respect to $z$
show that
$v$
must be constant.
This completes the proof.
\qed

\section*{Acknowledgement}
This work was supported by JSPS 	
Grant-in-Aid for Scientific Research (A)
Grant Number JP21H04433.

\end{document}